\begin{document}

\title{An Optimal Control Approach to Herglotz Variational Problems\thanks{This 
is a preprint of a paper whose final and definite form will be published in
\emph{Springer Communications in Computer and Information Science} (CCIS), ISSN	1865-0929. 
Paper submitted 3/Aug/2014; revised 26/Nov/2014; accepted for publication 27/Nov/2014.
Part of first author's Ph.D. project, which is carried out under the 
\emph{Doctoral Programme in Mathematics} (PDMat-UA) of University of Aveiro.}}

\author{Sim\~{a}o P. S. Santos \and Nat\'{a}lia Martins \and Delfim F. M. Torres}

\authorrunning{S. P. S. Santos, N. Martins and D. F. M. Torres}

\institute{CIDMA--Center for Research and Development in Mathematics and Applications,
Department of Mathematics, University of Aveiro, 3810-193 Aveiro, Portugal\\
\email{spsantos@ua.pt, natalia@ua.pt, delfim@ua.pt}}

\maketitle


\begin{abstract}
We address the generalized variational problem of Herglotz from an optimal control point of view.
Using the theory of optimal control, we derive a generalized Euler--Lagrange equation,
a transversality condition, a DuBois--Reymond necessary optimality condition and Noether's
theorem for Herglotz's fundamental problem, valid for piecewise smooth functions.\\

{\bf Keywords:} Herglotz's variational problems, optimal control,
Euler--Lagrange equations, invariance, DuBois--Reymond condition,
Noether's theorem.
\end{abstract}


\section{Introduction}

The generalized variational problem proposed
by Herglotz in 1930 \cite{Guenther1996,Herglotz1930}
can be formulated as follows:
\begin{equation}
\label{PH_1}
\tag{$P_H$}
\begin{gathered}
z(b)\longrightarrow \text{extr}\\
\text{with }\dot{z}(t)=L(t,x(t),\dot{x}(t),z(t)), \quad t \in [a,b],\\
\text{subject to } x(a)=\alpha, \quad z(a)=\gamma, \quad \alpha, \gamma \in \mathbb{R}.
\end{gathered}
\end{equation}
It consists in the determination of trajectories $x(\cdot)$ and corresponding trajectories
$z(\cdot)$ that extremize (maximize or minimize) the value $z(b)$,
where $L \in C^1([a,b]\times \mathbb{R}^{2n}\times \mathbb{R};\mathbb{R})$.
While in \cite{Guenther1996,Herglotz1930,Simao+NM+Torres2013} the admissible functions
are $x(\cdot) \in C^2([a,b];\mathbb{R}^n)$ and $z(\cdot) \in C^1([a,b];\mathbb{R})$,
here we consider \eqref{PH_1} in the wider class of functions
$x(\cdot) \in PC^1([a,b];\mathbb{R}^n)$ and $z(\cdot) \in PC^1([a,b];\mathbb{R})$.

It is obvious that Herglotz's problem \eqref{PH_1} reduces to the classical fundamental problem
of the calculus of variations (see, e.g., \cite{vanBrunt}) if the Lagrangian $L$
does not depend on the $z$ variable: if
$\dot{z}(t)=L(t,x(t),\dot{x}(t))$, $t \in [a,b]$,
then \eqref{PH_1} is equivalent to the classical variational problem
\begin{equation}
\label{eq:class:Funct}
\int_a^b L(t,x(t),\dot{x}(t))dt \longrightarrow \textrm{extr},
\quad x(a) = \alpha.
\end{equation}
Herglotz proved that an Euler--Lagrange optimality
condition for a pair $\left(x(\cdot),z(\cdot)\right)$ to be an extremizer
of the generalized variational problem \eqref{PH_1} is given by
\begin{multline}
\label{eq:gen:EL}
\frac{\partial L}{\partial x}\left(t,x(t),\dot{x}(t),z(t)\right)
-\frac{d}{dt}\frac{\partial L}{\partial \dot{x}}\left(t,x(t),\dot{x}(t),z(t)\right)\\
+\frac{\partial L}{\partial z}\left(t,x(t),\dot{x}(t),z(t)\right)
\frac{\partial L}{\partial \dot{x}}\left(t,x(t),\dot{x}(t),z(t)\right) = 0,
\end{multline}
$t \in [a,b]$. The equation \eqref{eq:gen:EL} is known as the generalized
Euler--Lagrange equation. Observe that for the fundamental problem
of the calculus of variations \eqref{eq:class:Funct} one has
$\frac{\partial L}{\partial z}=0$ and the differential equation
\eqref{eq:gen:EL} reduces to the classical Euler--Lagrange equation
$$
\frac{\partial L}{\partial x}\left(t,x(t),\dot{x}(t)\right)
-\frac{d}{dt}\frac{\partial L}{\partial \dot{x}}\left(t,x(t),\dot{x}(t)\right) =0.
$$

Since the celebrated work \cite{Pontryagin} by Pontryagin et al.,
the calculus of variations is seen as part of optimal control.
One of the simplest problems of optimal control, in Bolza form,
is the following one:
\begin{equation}
\label{problem P}
\tag{$P$}
\begin{gathered}
\mathcal{J}(x(\cdot),u(\cdot))=\int_a^b f(t,x(t),u(t))dt+\phi(x(b))\longrightarrow\text{extr}\\
\text{subject to } \dot{x}(t)=g(t,x(t),u(t)) \text{ and }  x(a)=\alpha, \quad \alpha \in \mathbb{R},
\end{gathered}
\end{equation}
where $f \in C^1([a,b]\times \mathbb{R}^{n}\times \Omega;\mathbb{R})$,
$\phi \in C^1(\mathbb{R}^{n};\mathbb{R})$,
$g \in C^1([a,b]\times \mathbb{R}^{n}\times \Omega;\mathbb{R}^n)$,
$x \in PC^1([a,b]; \mathbb{R}^n)$ and $u\in PC([a,b];\Omega)$,
with $\Omega \subseteq \mathbb{R}^r$ an open set. In the literature
of optimal control, $x$ and $u$ are called the state
and control variables, respectively, while $\phi$ is known
as the payoff or salvage term. Note that the classical
problem of the calculus of variations \eqref{eq:class:Funct}
is a particular case of problem \eqref{problem P}
with $\phi(x) \equiv 0$, $g(t,x,u)=u$ and $\Omega=\mathbb{R}^n$.
In this work we show how the results on Herglotz's problem
of the calculus of variations \eqref{PH_1} obtained in
\cite{Georgieva2002,Simao+NM+Torres2013}
can be generalized by using the theory of optimal control.
The main idea is simple and consists in rewriting the generalized variational
problem of Herglotz \eqref{PH_1} as a standard optimal control problem \eqref{problem P},
and then to apply available results of optimal control theory.

The paper is organized as follows. In Section~\ref{sec:prelim}
we briefly review the necessary concepts and results from optimal control theory.
In particular, we make use of Pontryagin's maximum principle (Theorem~\ref{PMP});
the DuBois--Reymond condition of optimal control (Theorem~\ref{thm DB-r opt});
and the Noether theorem of optimal control proved in \cite{Torres2001}
(cf. Theorem~\ref{thm opt cont Noether}).
Our contributions are then given in Section~\ref{sec:MainRes}:
we generalize the Euler--Lagrange equation and the transversality condition
for problem \eqref{PH_1} found in \cite{Simao+NM+Torres2013} to admissible
functions $x(\cdot) \in PC^1([a,b];\mathbb{R}^n)$ and $z(\cdot) \in PC^1([a,b];\mathbb{R})$
(Theorem~\ref{Thm:MR1}); we obtain a DuBois--Reymond necessary optimality condition for problem \eqref{PH_1}
(Theorem~\ref{Thm:DR}); and a generalization of the Noether theorem \cite{Georgieva2002}
(Theorem~\ref{thm noether PH1}) as a corollary of the optimal control results of Torres
\cite{Torres2002,Torres2001,Torres2004}.
We end with Section~\ref{sec:conc} of conclusions and future work.


\section{Preliminaries}
\label{sec:prelim}

The central result in optimal control theory is given by Pontryagin's maximum principle,
which is a first-order necessary optimality condition.

\begin{theorem}[Pontryagin's maximum principle for problem \eqref{problem P} \cite{Pontryagin}]
\label{PMP}
If a pair $(x(\cdot),u(\cdot))$ with $x \in PC^1([a,b]; \mathbb{R}^n)$ and $u\in PC([a,b];\Omega)$
is a solution to problem \eqref{problem P}, then there exists $\psi \in PC^1([a,b];\mathbb{R}^n)$
such that the following conditions hold:
\begin{itemize}
\item the optimality condition
\begin{equation}
\label{prob P opt condt}
\frac{\partial H}{\partial u}(t, x(t),u(t), \psi(t))=0;
\end{equation}
\item the adjoint system
\begin{equation}
\label{prob P adj syst}
\begin{cases}
\dot{x}(t)=\frac{\partial H}{\partial \psi}(t, x(t),u(t), \psi(t))\\
\dot{\psi}(t)=-\frac{\partial H}{\partial x}(t, x(t),u(t), \psi(t));
\end{cases}
\end{equation}
\item and the transversality condition
\begin{equation}
\label{prob P tr condt}
\psi(b)=\nabla\phi(x(b));
\end{equation}
\end{itemize}
where the Hamiltonian $H$ is defined by
\begin{equation}
\label{eq:def:Hamiltonian}
H(t,x,u,\psi)=f(t,x,u)+\psi\cdot g(t,x,u).
\end{equation}
\end{theorem}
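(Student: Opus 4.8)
The plan is to exploit the fact that $\Omega$ is \emph{open}, so that the stated optimality condition is a stationarity (weak maximum principle) rather than a genuine pointwise maximization, and hence can be obtained by the classical first-variation technique with a Lagrange multiplier playing the role of the costate $\psi$. First I would fix an optimal pair $(x(\cdot),u(\cdot))$ and an arbitrary variation $\eta \in PC([a,b];\mathbb{R}^r)$, and consider the perturbed control $u^\varepsilon(\cdot)=u(\cdot)+\varepsilon\,\eta(\cdot)$. Because $\Omega$ is open and $u$ has relatively compact image, $u^\varepsilon(t)\in\Omega$ for all $t$ once $|\varepsilon|$ is small enough, so the perturbation is admissible. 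Let $x^\varepsilon(\cdot)$ denote the unique solution of $\dot{x}^\varepsilon=g(t,x^\varepsilon,u^\varepsilon)$ with $x^\varepsilon(a)=\alpha$. By the theorem on smooth dependence of solutions of ODEs on a parameter, $\varepsilon\mapsto x^\varepsilon$ is differentiable at $\varepsilon=0$; writing $\xi(t)=\left.\frac{\partial x^\varepsilon(t)}{\partial\varepsilon}\right|_{\varepsilon=0}$, the variation $\xi$ solves the linearized equation $\dot{\xi}=\frac{\partial g}{\partial x}\,\xi+\frac{\partial g}{\partial u}\,\eta$ with $\xi(a)=0$.

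Next I would differentiate the cost. Since $(x,u)$ is an extremizer and $\varepsilon=0$ is interior, $\left.\frac{d}{d\varepsilon}\mathcal{J}(x^\varepsilon,u^\varepsilon)\right|_{\varepsilon=0}=0$, which gives
\[
\int_a^b\left(\frac{\partial f}{\partial x}\cdot\xi+\frac{\partial f}{\partial u}\cdot\eta\right)dt+\nabla\phi(x(b))\cdot\xi(b)=0.
\]
To remove the implicitly defined $\xi$ I would introduce a still-unknown multiplier $\psi\in PC^1([a,b];\mathbb{R}^n)$ and add the identically zero term $\int_a^b\psi\cdot\bigl(\dot{\xi}-\frac{\partial g}{\partial x}\xi-\frac{\partial g}{\partial u}\eta\bigr)dt$. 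Integrating the $\psi\cdot\dot\xi$ term by parts and using $\xi(a)=0$, and recalling the Hamiltonian \eqref{eq:def:Hamiltonian}, the first variation becomes
\[
\int_a^b\left[\left(\frac{\partial H}{\partial x}+\dot\psi\right)\cdot\xi+\frac{\partial H}{\partial u}\cdot\eta\right]dt+\bigl(\nabla\phi(x(b))-\psi(b)\bigr)\cdot\xi(b)=0.
\]

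Now I would \emph{choose} $\psi$ to annihilate the terms containing the uncontrolled variation $\xi$: imposing $\dot\psi=-\frac{\partial H}{\partial x}$ together with $\psi(b)=\nabla\phi(x(b))$ is exactly the second line of the adjoint system \eqref{prob P adj syst} and the transversality condition \eqref{prob P tr condt}. Since this is a linear terminal-value problem for $\psi$, such a $\psi\in PC^1$ exists and is unique. With this choice the displayed identity collapses to $\int_a^b\frac{\partial H}{\partial u}\cdot\eta\,dt=0$, and since $\eta\in PC$ is arbitrary, the fundamental (DuBois--Reymond) lemma of the calculus of variations yields the optimality condition \eqref{prob P opt condt}, namely $\frac{\partial H}{\partial u}=0$. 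Finally, the first line of \eqref{prob P adj syst}, $\dot{x}=\frac{\partial H}{\partial\psi}=g(t,x,u)$, is merely a restatement of the control system itself, so all three conclusions hold simultaneously.

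The step I expect to be the main obstacle is the rigorous justification of the differentiability of $\varepsilon\mapsto x^\varepsilon$ and of the resulting linearized equation in the \emph{piecewise} $C^1$ setting: the control $u$, and hence $g(t,x^\varepsilon,u^\varepsilon)$, may have finitely many jump discontinuities in $t$, so the smooth-dependence argument must be applied on each subinterval of smoothness and then patched together, verifying that $\xi$ remains continuous across the corners and that the $\psi$ produced by the backward adjoint equation is likewise continuous there (a Weierstrass--Erdmann type continuity). Handling these corner conditions carefully, rather than the formal computation of the first variation, is where the genuine technical care is required.
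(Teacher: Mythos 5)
Your proposal is a correct derivation, but note that the paper does not prove this theorem at all: it is quoted as a known result from Pontryagin et al.\ \cite{Pontryagin} and used as a black box, so there is no ``paper's proof'' to compare against. What you have written is the standard first-variation (needle-free) argument for the \emph{weak} maximum principle, and you correctly identify the key structural point that makes it sufficient here: since $\Omega$ is open and the stated optimality condition is only the stationarity $\frac{\partial H}{\partial u}=0$ rather than pointwise maximization of $H$ over $\Omega$, the classical Lagrange-multiplier computation with two-sided perturbations $u+\varepsilon\eta$ delivers exactly the normal-form conclusions \eqref{prob P opt condt}--\eqref{prob P tr condt}. The argument is sound: admissibility of $u^\varepsilon$ for small $|\varepsilon|$ follows because the closure of the image of a $PC$ control (including its one-sided limits at corners) is a compact subset of the open set $\Omega$; the backward linear adjoint equation with piecewise continuous coefficients indeed has a unique $PC^1$ solution; and the final step is really Lagrange's fundamental lemma (take $\eta=\frac{\partial H}{\partial u}$ itself, which is an admissible $PC$ test function) rather than the du Bois--Reymond lemma, a harmless terminological slip. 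Your closing remark about patching the smooth-dependence argument across the finitely many corners of $u$ is exactly where the残 technical care lies, and it is handled in the cited reference; for the purposes of this paper your sketch would serve as a self-contained proof of the weak form actually used, which is arguably a small gain in self-containedness over the paper's citation-only treatment.
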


\begin{definition}[Pontryagin extremal to \eqref{problem P}]
A triplet $(x(\cdot),u(\cdot), \psi(\cdot))$ with
$x \in PC^1([a,b]; \mathbb{R}^n)$, $u\in PC([a,b];\Omega)$ and
$\psi \in PC^1([a,b];\mathbb{R}^n)$ is called a Pontryagin extremal
to problem \eqref{problem P} if it satisfies the optimality condition
\eqref{prob P opt condt}, the adjoint system \eqref{prob P adj syst}
and the transversality condition \eqref{prob P tr condt}.
\end{definition}

\begin{theorem}[DuBois--Reymond condition of optimal control \cite{Pontryagin}]
\label{thm DB-r opt}
If $(x(\cdot),u(\cdot), \psi(\cdot))$ is a Pontryagin extremal
to problem \eqref{problem P}, then the Hamiltonian \eqref{eq:def:Hamiltonian}
satisfies the equality
\begin{equation*}
\frac{d H}{dt}(t,x(t),u(t),\psi(t))=\frac{\partial H}{\partial t}(t,x(t),u(t),\psi(t)),
\end{equation*}
$t \in [a,b]$.
\end{theorem}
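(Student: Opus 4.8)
The plan is to differentiate the Hamiltonian along the extremal and to let the three conditions of the maximum principle produce the desired cancellations. Set $M(t):=H(t,x(t),u(t),\psi(t))$. On any open subinterval of $[a,b]$ on which $u(\cdot)$ is continuous and $x(\cdot),\psi(\cdot)$ are of class $C^1$ --- that is, away from the finitely many instants at which $u$, $\dot x$ or $\dot\psi$ may jump --- I would compute $\dot M$ and show that it collapses to $\partial H/\partial t$.

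Formally, the chain rule gives
\[
\frac{dM}{dt} = \frac{\partial H}{\partial t} + \frac{\partial H}{\partial x}\cdot\dot x + \frac{\partial H}{\partial u}\cdot\dot u + \frac{\partial H}{\partial \psi}\cdot\dot\psi,
\]
every partial derivative being evaluated at $(t,x(t),u(t),\psi(t))$. The optimality condition \eqref{prob P opt condt} annihilates the third term, since $\frac{\partial H}{\partial u}\cdot\dot u = 0$. For the two remaining terms the adjoint system \eqref{prob P adj syst} supplies $\dot x = \partial H/\partial\psi$ and $\dot\psi = -\partial H/\partial x$, whence
\[
\frac{\partial H}{\partial x}\cdot\dot x + \frac{\partial H}{\partial \psi}\cdot\dot\psi = \frac{\partial H}{\partial x}\cdot\frac{\partial H}{\partial \psi} - \frac{\partial H}{\partial \psi}\cdot\frac{\partial H}{\partial x} = 0.
\]
This leaves $\frac{dM}{dt} = \frac{\partial H}{\partial t}$, which is precisely the claimed identity.

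The main obstacle is that $u\in PC$ need not be differentiable, so the term $\frac{\partial H}{\partial u}\cdot\dot u$ is not literally meaningful and the naive chain rule is not justified. To make the argument rigorous I would avoid differentiating $u$ altogether, splitting the increment at a point $t_0$ of continuity of $u$ as
\begin{multline*}
M(t)-M(t_0) = \big[H(t,x(t),u(t),\psi(t))-H(t,x(t),u(t_0),\psi(t))\big]\\
+ \big[H(t,x(t),u(t_0),\psi(t))-H(t_0,x(t_0),u(t_0),\psi(t_0))\big].
\end{multline*}
In the second bracket the control is frozen at $u(t_0)$, so the chain rule applies to the $C^1$ data $t,x(\cdot),\psi(\cdot)$ and, after the cancellation above, contributes $\partial H/\partial t$ in the limit. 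The first bracket is governed by $\partial H/\partial u$, which vanishes at $t_0$ by \eqref{prob P opt condt}; controlling its difference quotient as $t\to t_0$, using the continuity of $u$ at $t_0$ together with the continuity of $\partial H/\partial u$, is the delicate point where the hypotheses of the maximum principle must be used with care. Since the statement is classical and due to Pontryagin et al.~\cite{Pontryagin}, I would either carry out this estimate or simply invoke their result.
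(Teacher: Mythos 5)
The paper does not actually prove this theorem: it is stated as a preliminary and attributed to Pontryagin et al.~\cite{Pontryagin}, so there is no in-paper argument to compare against. Your formal computation is the standard one, and the cancellation via \eqref{prob P opt condt} and the antisymmetry of the adjoint system \eqref{prob P adj syst} is exactly right on any subinterval where everything is smooth. You are also right to distrust the term $\frac{\partial H}{\partial u}\cdot\dot u$, since $u\in PC$ need not be differentiable anywhere on its pieces, and your decomposition of the increment correctly isolates the problem: the second bracket, with $u$ frozen, yields $\partial H/\partial t$ by the chain rule and the adjoint system.

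The genuine gap is the first bracket, and it is worth being precise about why it does not close as you have set things up. By the mean value theorem that bracket equals $\frac{\partial H}{\partial u}(t,x(t),\tilde u,\psi(t))\cdot\bigl(u(t)-u(t_0)\bigr)$ with $\tilde u$ between $u(t_0)$ and $u(t)$; the stationarity condition makes the first factor $o(1)$ (indeed $O(|u(t)-u(t_0)|)$ if $H$ is $C^2$ in $u$), but the difference quotient still carries a factor $\frac{|u(t)-u(t_0)|}{|t-t_0|}$, which for a merely continuous $u$ (think $u(t)\sim|t-t_0|^{1/3}$) can blow up fast enough to defeat the estimate. Continuity of $u$ at $t_0$ plus the vanishing of $\partial H/\partial u$ is therefore not sufficient. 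The two standard ways out are: (i) use the maximization form of the maximum principle, $u(t)\in\arg\max_v H(t,x(t),v,\psi(t))$, which gives the two-sided squeeze $H(t,x(t),u(t_0),\psi(t))-H(t_0,x(t_0),u(t_0),\psi(t_0))\le M(t)-M(t_0)\le H(t,x(t),u(t),\psi(t))-H(t_0,x(t_0),u(t),\psi(t_0))$ and avoids estimating $u(t)-u(t_0)$ altogether; or (ii) strengthen the regularity of the extremal control to piecewise $C^1$, which makes your naive chain-rule computation literally valid. Since the paper's definition of Pontryagin extremal only assumes the stationarity condition and $u\in PC$, your fallback of invoking \cite{Pontryagin} is the honest resolution, and it is also what the paper itself does.
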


Noether's theorem has become a fundamental tool
of modern theoretical physics \cite{MR3072684},
the calculus of variations \cite{MR2099056,MR2098297},
and optimal control \cite{Torres2002,Torres2001,Torres2004}.
It states that when an optimal control problem is invariant
under a one parameter family of transformations,
then there exists a corresponding conservation law:
an expression that is conserved along all the Pontryagin
extremals of the problem \cite{Torres2002,Torres2001,Torres2004,Torres2006}.
Here we use Noether's theorem as found in \cite{Torres2001}, which is formulated
for problems of optimal control in Lagrange form, that is, for problem
\eqref{problem P} with $\phi \equiv 0$. In order to apply the results of \cite{Torres2001}
to the Bolza problem \eqref{problem P}, we rewrite it in the following equivalent Lagrange form:
\begin{equation}
\label{eq:prob:Lag}
\begin{gathered}
\mathcal{I}(x_0(\cdot),x(\cdot),u(\cdot))
=\int_a^b \left[f(t,x(t),u(t))+ x_0(t)\right] dt \longrightarrow\text{extr},\\
\begin{cases}
\dot{x}_0(t)=0,\\
\dot{x}(t)=g\left(t,x(t),u(t)\right),
\end{cases}\\
x_0(a)= \frac{\phi(x(b))}{b-a}, \  x(a)=\alpha.
\end{gathered}
\end{equation}
The notion of invariance for problem \eqref{problem P} is obtained
by applying the notion of invariance found in \cite{Torres2001}
to the equivalent optimal control problem \eqref{eq:prob:Lag}.
In Definition~\ref{def inv OC} we use the little-o notation.

\begin{definition}[Invariance of problem \eqref{problem P}]
\label{def inv OC}
Let $h^s$ be a one-parameter family of $C^1$ invertible maps
\begin{equation*}
\begin{gathered}
h^s:[a,b]\times \mathbb{R}^n\times \Omega \rightarrow\mathbb{R}\times\mathbb{R}^n\times \mathbb{R}^r,\\
h^s(t,x,u)=\left(\mathcal{T}^s(t,x,u), \mathcal{X}^s(t,x,u),\mathcal{U}^s(t,x,u)\right),\\
h^0(t,x,u)=(t,x,u) \text{ for all } (t,x,u)\in [a,b]\times \mathbb{R}^n\times \Omega.
\end{gathered}
\end{equation*}
Problem \eqref{problem P} is said to be invariant under transformations $h^s$
if for all $(x(\cdot),u(\cdot))$ the following two conditions hold:
\begin{enumerate}
\item[(i)]
\begin{multline}
\label{eq inv OC 1}
\left[f \circ h^s(t,x(t),u(t))+\frac{\phi(x(b))}{b-a} + \xi s + o(s)\right]\frac{d\mathcal{T}^s}{dt}(t,x(t),u(t))\\
= f(t,x(t),u(t)) + \frac{\phi(x(b))}{b-a}
\end{multline}
for some constant $\xi$;
\item[(ii)]
\begin{equation}
\label{eq inv OC 2}
\frac{d\mathcal{X}^s}{dt}\left(t,x(t),u(t)\right)
=g\circ h^s(t,x(t),u(t))\frac{d\mathcal{T}^s}{dt}(t,x(t),u(t)).
\end{equation}
\end{enumerate}
\end{definition}

\begin{theorem}[Noether's theorem for the optimal control problem \eqref{problem P}]
\label{thm opt cont Noether}
If problem \eqref{problem P} is invariant
in the sense of Definition~\ref{def inv OC}, then the quantity
\begin{equation*}
(b-t) \xi + \psi(t) \cdot X(t,x(t),u(t))
-\left[H(t,x(t),u(t),\psi(t)) + \frac{\phi(x(b))}{b-a}\right]\cdot T(t,x(t),u(t))
\end{equation*}
is constant in $t$ along every Pontryagin extremal $(x(\cdot),u(\cdot), \psi(\cdot))$
of problem \eqref{problem P}, where
\begin{equation*}
\begin{gathered}
T(t,x(t),u(t))=\frac{\partial \mathcal{T}^s}{\partial s}(t,x(t),u(t))\biggm\vert_{s=0},\\
X(t,x(t),u(t))=\frac{\partial \mathcal{X}^s}{\partial s}(t,x(t),u(t))\biggm\vert_{s=0},
\end{gathered}
\end{equation*}
and $H$ is defined by \eqref{eq:def:Hamiltonian}.
\end{theorem}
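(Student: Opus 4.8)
The plan is to obtain Theorem~\ref{thm opt cont Noether} as a direct consequence of the Noether theorem of \cite{Torres2001}, which is stated for optimal control problems in Lagrange form, by applying it to the equivalent reformulation \eqref{eq:prob:Lag} of \eqref{problem P}. First I would record that, along any admissible pair, $x_0(\cdot)$ is the constant $\phi(x(b))/(b-a)$, so that the running cost of \eqref{eq:prob:Lag} is $\tilde L(t,x_0,x,u)=f(t,x,u)+x_0$ and its Hamiltonian is $\tilde H = H + x_0 = H + \phi(x(b))/(b-a)$, with $H$ given by \eqref{eq:def:Hamiltonian}. The augmented state is $(x_0,x)$ with adjoint $(\psi_0,\psi)$; the adjoint equation for the fictitious coordinate reads $\dot\psi_0(t)=-\partial\tilde H/\partial x_0=-1$, and together with the boundary condition $\psi_0(b)=0$ inherited from the fact that $x_0(b)$ is free, this yields $\psi_0(t)=b-t$. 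The adjoint equation for $\psi$ and the optimality condition coincide with \eqref{prob P adj syst} and \eqref{prob P opt condt}, so that the Pontryagin extremals of \eqref{eq:prob:Lag} are exactly those of \eqref{problem P}.

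Second, I would check that invariance of \eqref{problem P} in the sense of Definition~\ref{def inv OC} is nothing but the invariance hypothesis of \cite{Torres2001} for \eqref{eq:prob:Lag}. The idea is to lift $h^s$ to the augmented state by letting the fictitious coordinate transform as $x_0\mapsto x_0+\xi s + o(s)$, so that its infinitesimal generator is $X_0=\xi$. With this choice $\tilde L\circ h^s = f\circ h^s + \phi(x(b))/(b-a) + \xi s + o(s)$, whence condition \eqref{eq inv OC 1} becomes precisely the invariance of the Lagrangian $\tilde L$ (multiplied by $d\mathcal T^s/dt$), while \eqref{eq inv OC 2} is the invariance of the dynamics; the trivial dynamics $\dot x_0=0$ is automatically invariant. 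Thus \eqref{eq:prob:Lag} is invariant in the sense required by \cite{Torres2001}.

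Third, I would invoke the Noether theorem of \cite{Torres2001}: along every Pontryagin extremal of the Lagrange problem the quantity $\psi_0(t)X_0 + \psi(t)\cdot X(t,x(t),u(t)) - \tilde H\,T(t,x(t),u(t))$ is constant in $t$. Substituting $X_0=\xi$, $\psi_0(t)=b-t$ and $\tilde H = H + \phi(x(b))/(b-a)$ turns this into $(b-t)\xi + \psi(t)\cdot X - [H + \phi(x(b))/(b-a)]\,T$, which is exactly the conserved quantity claimed. As an independent verification, one may differentiate this expression in $t$ and show that it vanishes directly, using the adjoint system \eqref{prob P adj syst}, the optimality condition \eqref{prob P opt condt}, the DuBois--Reymond condition (Theorem~\ref{thm DB-r opt}) in the form $dH/dt=\partial H/\partial t$, and the first-order identity obtained by differentiating \eqref{eq inv OC 1} and \eqref{eq inv OC 2} at $s=0$; all terms then cancel.

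The step I expect to be the main obstacle is the bookkeeping of the augmented problem: confirming that the coupled boundary condition $x_0(a)=\phi(x(b))/(b-a)$ of \eqref{eq:prob:Lag} reproduces the transversality $\psi(b)=\nabla\phi(x(b))$ of \eqref{prob P tr condt} together with $\psi_0(b)=0$, and that the lift of $h^s$ with $X_0=\xi$ is the correct reading of the gauge term $\xi s$ in \eqref{eq inv OC 1}. Once this identification between \eqref{problem P} and \eqref{eq:prob:Lag}, both at the level of extremals and of invariance, is secured, the conclusion follows by a one-line substitution into the result of \cite{Torres2001}.
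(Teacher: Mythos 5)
Your proposal is correct and follows essentially the same route as the paper, whose proof is only a two-line sketch instructing the reader to apply the Noether theorem of \cite{Torres2001} and the Pontryagin maximum principle to the equivalent Lagrange problem \eqref{eq:prob:Lag}, using precisely the adjoint equation and transversality condition for the multiplier $\psi_0$ associated with $x_0$ (giving $\psi_0(t)=b-t$) that you work out explicitly. Your write-up simply supplies the bookkeeping the paper leaves as ``a simple exercise.''
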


\begin{proof}
The result is a simple exercise obtained by applying
the Noether theorem of \cite{Torres2001}
and the Pontryagin maximum principle (Theorem~\ref{PMP})
to the equivalent optimal control problem \eqref{eq:prob:Lag}
(in particular using the adjoint equation corresponding to the multiplier
associated with the state variable $x_0$ and the respective
transversality condition).
\end{proof}


\section{Main Results}
\label{sec:MainRes}

We begin by introducing some basic definitions for the
generalized variational problem of Herglotz \eqref{PH_1}.

\begin{definition}[Admissible pair to problem \eqref{PH_1}]
We say that $(x(\cdot),z(\cdot))$ with $x(\cdot) \in PC^1([a,b];\mathbb{R}^n)$
and $z(\cdot) \in PC^1([a,b];\mathbb{R})$ is an admissible pair to problem
\eqref{PH_1} if it satisfies the equation
\begin{equation*}
\dot{z}(t)=L(t,x(t),\dot{x}(t),z(t)), \quad t \in [a,b],
\end{equation*}
and the initial conditions $x(a)=\alpha$ and $z(a)=\gamma$, $\alpha, \gamma \in \mathbb{R}$.
\end{definition}

\begin{definition}[Extremizer to problem \eqref{PH_1}]
We say that an admissible pair $(x^*(\cdot),z^*(\cdot))$ is an extremizer
to problem \eqref{PH_1} if $z(b)-z^*(b)$ has the same signal for all admissible
pairs $(x(\cdot),z(\cdot))$ that satisfy $\|z-z^* \|_0< \epsilon$ and $\|x-x^* \|_0< \epsilon$
for some positive real $\epsilon$, where $\|y\|_0=\smash{\displaystyle\max_{a\leq t \leq b}}|y(t)|$.
\end{definition}

We now present a necessary condition for a pair $(x(\cdot),z(\cdot))$
to be a solution (extremizer) to problem \eqref{PH_1}. The following result generalizes
\cite{Guenther1996,Herglotz1930,Simao+NM+Torres2013} by considering a more general class of functions.
To simplify notation, we use the operator $\langle\cdot,\cdot\rangle$ defined by
$$
\langle x, z \rangle(t):=(t,x(t),\dot{x}(t),z(t)).
$$
When there is no possibility of ambiguity,
we sometimes suppress arguments.

\begin{theorem}[Euler--Lagrange equation and transversality condition for problem \eqref{PH_1}]
\label{Thm:MR1}
If $(x(\cdot),z(\cdot))$ is an extremizer to problem \eqref{PH_1}, then the Euler--Lagrange equation
\begin{equation}
\label{PH1_EL}
\frac{\partial L}{\partial x}\langle x, z \rangle(t)
-\frac{d}{dt}\left(\frac{\partial L}{\partial \dot{x}}\right)\langle x, z \rangle(t)
+\frac{\partial L}{\partial z}\langle x, z \rangle(t)
\frac{\partial L}{\partial \dot{x}}\langle x, z \rangle(t) = 0
\end{equation}
holds, $t \in [a,b]$. Moreover, the following
transversality condition holds:
\begin{equation}
\label{PH1_tr_cdt}
\frac{\partial L}{\partial \dot{x}}\langle x,z\rangle(b)=0.
\end{equation}
\end{theorem}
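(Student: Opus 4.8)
The plan is to reduce the Herglotz problem \eqref{PH_1} to the standard optimal control problem \eqref{problem P} and then invoke Pontryagin's maximum principle (Theorem~\ref{PMP}). First I would introduce the control variable $u(t) := \dot{x}(t)$, so that the dynamics $\dot{x}(t) = u(t)$ play the role of the state equation $g(t,x,u) = u$ in \eqref{problem P}. The essential difficulty is that the functional to be extremized in \eqref{PH_1} is $z(b)$, where $z$ is itself defined through a differential equation involving $L$; this $z$ is not yet in the Bolza form of problem \eqref{problem P}. The key observation is that, by integrating $\dot{z}(t) = L\langle x,z\rangle(t)$ from $a$ to $b$ and using $z(a)=\gamma$, we obtain
\begin{equation*}
z(b) = \gamma + \int_a^b L(t,x(t),\dot{x}(t),z(t))\,dt,
\end{equation*}
so that maximizing or minimizing $z(b)$ is equivalent to extremizing the integral functional with integrand $f = L$. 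The obstruction is that $L$ depends on $z$, which is not a free variable but is coupled to $x$ through the Herglotz equation; the natural remedy is to treat $z$ as an \emph{additional state variable}, augmenting the state to $(x,z)$ with the extra dynamics $\dot{z} = L(t,x,u,z)$.

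With this reformulation, the plan is to set up \eqref{problem P} in dimension $n+1$, with augmented state $(x,z)$, control $u$, state equations $\dot{x}=u$ and $\dot{z}=L(t,x,u,z)$, running cost $f\equiv 0$, and payoff term $\phi(x(b),z(b)) = z(b)$ — or equivalently keep the integral form above. I would then write down the Hamiltonian \eqref{eq:def:Hamiltonian} with a costate pair $(\psi_x,\psi_z)$, namely
\begin{equation*}
H = \psi_x \cdot u + \psi_z\, L(t,x,u,z),
\end{equation*}
and apply the three conditions of Theorem~\ref{PMP}. The adjoint equation for $\psi_z$ reads $\dot{\psi}_z = -\partial H/\partial z = -\psi_z\, \partial L/\partial z$, while the transversality condition \eqref{prob P tr condt} on the $z$-component gives $\psi_z(b) = 1$ (the derivative of the payoff $z(b)$). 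Solving this linear scalar equation for $\psi_z$ explicitly, one finds $\psi_z(t)$ nonvanishing and determined by $\partial L/\partial z$; this is precisely the integrating-factor mechanism that produces the extra term $\frac{\partial L}{\partial z}\frac{\partial L}{\partial \dot{x}}$ in the generalized Euler--Lagrange equation.

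The remaining steps are computational. The optimality condition \eqref{prob P opt condt}, $\partial H/\partial u = 0$, yields $\psi_x = -\psi_z\,\partial L/\partial \dot{x}$; the adjoint equation for $\psi_x$, $\dot{\psi}_x = -\partial H/\partial x = -\psi_z\,\partial L/\partial x$, then gives, after differentiating $\psi_x = -\psi_z\,\partial L/\partial \dot{x}$ and substituting $\dot{\psi}_z = -\psi_z\,\partial L/\partial z$, the identity that (upon dividing by the nonzero factor $\psi_z$) collapses to \eqref{PH1_EL}. The transversality condition \eqref{PH1_tr_cdt} follows because the $x$-component of the payoff is zero, so \eqref{prob P tr condt} forces $\psi_x(b)=0$, and since $\psi_z(b)=1\neq 0$ the relation $\psi_x = -\psi_z\,\partial L/\partial \dot x$ evaluated at $b$ yields $\partial L/\partial \dot{x}\langle x,z\rangle(b)=0$. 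The main obstacle I anticipate is not any single algebraic manipulation but rather verifying that the augmented problem is genuinely of the form \eqref{problem P} so that Theorem~\ref{PMP} applies cleanly in the $PC^1$ setting — in particular, confirming that $\psi_z$ never vanishes (guaranteed here since it solves a homogeneous linear ODE with terminal value $1$), which is what legitimizes dividing through to recover \eqref{PH1_EL}.
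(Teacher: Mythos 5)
Your proposal follows exactly the paper's own proof: recasting \eqref{PH_1} as the optimal control problem with augmented state $(x,z)$, control $u=\dot{x}$, $f\equiv 0$ and $\phi(x,z)=z$, then applying Theorem~\ref{PMP}, solving the linear adjoint equation for $\psi_z$ with $\psi_z(b)=1$ to get a nonvanishing integrating factor, and combining the optimality condition with the adjoint equation for $\psi_x$ to obtain \eqref{PH1_EL} and \eqref{PH1_tr_cdt}. The argument is correct and takes essentially the same route as the paper.
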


\begin{proof}
Observe that Herglotz's problem \eqref{PH_1} is a particular case
of problem \eqref{problem P} obtained by considering $x$ and $z$ as state variables
(two components of one vectorial state variable),
$\dot{x}$ as the control variable $u$,
and by choosing $f\equiv 0$ and $\phi(x,z)=z$.
Note that since $x(t)\in \mathbb{R}^n$,
we have $u(t)\in \mathbb{R}^n$ (i.e., for Herglotz's problem \eqref{PH_1} one has $r=n$).
In this way, the problem of Herglotz, described as an optimal control problem, takes the form
\begin{equation}
\label{PH_1_OC}
\begin{gathered}
z(b) \longrightarrow \text{extr},\\
\begin{cases}
\dot{x}(t)=u(t),\\
\dot{z}(t)=L(t,x(t),u(t),z(t)),
\end{cases}\\
x(a)=\alpha, \  z(a)=\gamma,
\quad \alpha, \gamma \in \mathbb{R}.
\end{gathered}
\end{equation}
It follows from Pontryagin's maximum principle (Theorem~\ref{PMP})
that there exists $\psi_x \in PC^1([a,b];\mathbb{R}^n)$
and $\psi_z \in PC^1([a,b];\mathbb{R})$ such that the following conditions hold:
\begin{itemize}
\item the optimality condition
\begin{equation}
\label{opt_cond PH1}
\frac{\partial H}{\partial u}(t,x(t),u(t),z(t),\psi_x(t),\psi_z(t))=0;
\end{equation}
\item the adjoint system
\begin{equation}
\label{adj_syst PH1}
\begin{cases}
\dot{x}(t)=\frac{\partial H}{\partial \psi_x}(t,x(t),u(t),z(t),\psi_x(t),\psi_z(t))\\
\dot{z}(t)=\frac{\partial H}{\partial \psi_z}(t,x(t),u(t),z(t),\psi_x(t),\psi_z(t))\\
\dot{\psi}_x(t)=-\frac{\partial H}{\partial x}(t,x(t),u(t),z(t),\psi_x(t),\psi_z(t))\\
\dot{\psi}_z(t)=-\frac{\partial H}{\partial z}(t,x(t),u(t),z(t),\psi_x(t),\psi_z(t));
\end{cases}
\end{equation}		
\item and the transversality conditions
\begin{equation}
\label{tr_cond PH1}
\begin{cases}
\psi_x(b)=0,\\
\psi_z(b)=1,
\end{cases}
\end{equation}
\end{itemize}
where the Hamiltonian $H$ is defined by
$$
H(t,x,u,z,\psi_x,\psi_z)=\psi_x \cdot u+\psi_z \cdot L(t,x,u,z).
$$
Observe that the adjoint system \eqref{adj_syst PH1} implies that
\begin{equation}
\label{adj_syst_double}
\begin{cases}
\dot{\psi}_x=-\psi_z\frac{\partial L}{\partial x}\\
\dot{\psi}_z=-\psi_z\frac{\partial L}{\partial z}.
\end{cases}
\end{equation}
This means that $\psi_z$ is solution of a first-order linear differential equation,
which is solved using an integrand factor to find that
$\psi_z=ke^{-\int_a^t\frac{\partial L}{\partial z}d\theta}$ with $k$ a constant.
From the second transversality condition in \eqref{tr_cond PH1}, we obtain that
$k=e^{\int_a^b\frac{\partial L}{\partial z}d\theta}$ and, consequently,
$$
\psi_z=e^{\int_t^b\frac{\partial L}{\partial z}d\theta}.
$$
The optimality condition \eqref{opt_cond PH1} is equivalent to
$\psi_x+\psi_z\frac{\partial L}{\partial u}=0$ and, after derivation, we obtain that
\begin{equation*}
\dot{\psi}_x=-\frac{d}{dt}\left(\psi_z\frac{\partial L}{\partial u}\right)
=-\dot{\psi}_z\frac{\partial L}{\partial u}-\psi_z\frac{d}{dt}\left(\frac{\partial L}{\partial u}\right)
=\psi_z\frac{\partial L}{\partial z}\frac{\partial L}{\partial u}
-\psi_z\frac{d}{dt}\left(\frac{\partial L}{\partial u}\right).
\end{equation*}
Now, comparing with \eqref{adj_syst_double}, we have
$$
-\psi_z\frac{\partial L}{\partial x}
=\psi_z\frac{\partial L}{\partial z}\frac{\partial L}{\partial u}
-\psi_z\frac{d}{dt}\left(\frac{\partial L}{\partial u}\right).
$$
Since $\psi_z(t)\neq 0$ for all $t \in [a,b]$ and $\dot{x}=u$,
we obtain the Euler--Lagrange equation \eqref{PH1_EL}:
$$
\frac{\partial L}{\partial x}-\frac{d}{dt}\left(\frac{\partial L}{\partial \dot{x}}\right)
+\frac{\partial L}{\partial z}\frac{\partial L}{\partial \dot{x}} = 0.
$$
Note that from the optimality condition \eqref{opt_cond PH1} we obtain that
$\psi_x=-\psi_z\frac{\partial L}{\partial u}=-\psi_z\frac{\partial L}{\partial \dot{x}}$,
which together with transversality condition \eqref{tr_cond PH1} for $\psi_x$
leads to the transversality condition \eqref{PH1_tr_cdt}:
\begin{equation*}
\frac{\partial L}{\partial \dot{x}}(b,x(b),\dot{x}(b),z(b))=0.
\end{equation*}
This concludes the proof.
\end{proof}

\begin{definition}[Extremal to problem \eqref{PH_1}]
We say that an admissible pair $(x(\cdot), z(\cdot))$ is an extremal to problem
\eqref{PH_1} if it satisfies the Euler--Lagrange equation \eqref{PH1_EL}
and the transversality condition \eqref{PH1_tr_cdt}.
\end{definition}

\begin{theorem}[DuBois--Reymond condition for problem \eqref{PH_1}]
\label{Thm:DR}
If $(x(\cdot), z(\cdot))$ is an extremal to problem \eqref{PH_1}, then
\begin{equation*}
\frac{d}{dt}\left( -\psi_z(t)\frac{\partial L}{\partial \dot{x}}\langle x, z \rangle(t)
\dot{x}(t)+\psi_z(t) L\langle x, z \rangle(t)\right)
=\psi_z(t)\frac{\partial L}{\partial t}\langle x, z \rangle(t),
\end{equation*}
$t \in [a,b]$, where
$\psi_z(t)=e^{\int_t^b\frac{\partial L}{\partial z}\langle x,z \rangle(\theta)d\theta}$.
\end{theorem}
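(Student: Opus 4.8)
The plan is to reduce this to the DuBois--Reymond condition of optimal control (Theorem~\ref{thm DB-r opt}) applied to the same reformulation of Herglotz's problem as the optimal control problem \eqref{PH_1_OC} used in the proof of Theorem~\ref{Thm:MR1}, with Hamiltonian $H(t,x,u,z,\psi_x,\psi_z)=\psi_x\cdot u+\psi_z\, L(t,x,u,z)$. Theorem~\ref{thm DB-r opt} requires a Pontryagin extremal, whereas our hypothesis is only that $(x(\cdot),z(\cdot))$ is an extremal to \eqref{PH_1}, i.e.\ it satisfies the Euler--Lagrange equation \eqref{PH1_EL} and the transversality condition \eqref{PH1_tr_cdt}. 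So the first task is to manufacture multipliers that promote $(x(\cdot),z(\cdot))$ to a Pontryagin extremal of \eqref{PH_1_OC}.

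I would set $u=\dot x$, define $\psi_z(t)=e^{\int_t^b\frac{\partial L}{\partial z}\langle x,z\rangle(\theta)\,d\theta}$ exactly as in the statement, and put $\psi_x=-\psi_z\frac{\partial L}{\partial\dot x}$. I then verify, essentially by reading the computations in the proof of Theorem~\ref{Thm:MR1} backwards, that the triple $(x(\cdot),u(\cdot),(\psi_x(\cdot),\psi_z(\cdot)))$ is a Pontryagin extremal. The optimality condition $\psi_x+\psi_z\frac{\partial L}{\partial u}=0$ holds by the very definition of $\psi_x$; the $\psi_z$-adjoint equation $\dot\psi_z=-\psi_z\frac{\partial L}{\partial z}$ holds because $\psi_z$ is built as its solution; the $\psi_x$-adjoint equation $\dot\psi_x=-\psi_z\frac{\partial L}{\partial x}$ follows by differentiating $\psi_x=-\psi_z\frac{\partial L}{\partial\dot x}$ and substituting the Euler--Lagrange equation \eqref{PH1_EL}; finally $\psi_z(b)=1$ is immediate and $\psi_x(b)=-\psi_z(b)\frac{\partial L}{\partial\dot x}\langle x,z\rangle(b)=0$ follows from \eqref{PH1_tr_cdt}.

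With a genuine Pontryagin extremal in hand, I would invoke Theorem~\ref{thm DB-r opt} to obtain $\frac{dH}{dt}=\frac{\partial H}{\partial t}$ along it. Computing the right-hand side, only the term $\psi_z\, L$ depends explicitly on $t$, so $\frac{\partial H}{\partial t}=\psi_z\frac{\partial L}{\partial t}$; computing the left-hand side and substituting $\psi_x=-\psi_z\frac{\partial L}{\partial\dot x}$ and $u=\dot x$ into $H=\psi_x\cdot u+\psi_z L$ turns $\frac{dH}{dt}$ into $\frac{d}{dt}\bigl(-\psi_z\frac{\partial L}{\partial\dot x}\dot x+\psi_z L\bigr)$, which is exactly the claimed identity.

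The only real obstacle is this first step: one must check that the extremal hypothesis (Euler--Lagrange plus transversality) is \emph{precisely} enough to recover all four Pontryagin conditions with the explicit multipliers above. The delicate point is that the $\psi_x$-adjoint equation is equivalent to \eqref{PH1_EL} and the boundary value $\psi_x(b)=0$ is equivalent to \eqref{PH1_tr_cdt}, so no information beyond that of an extremal is needed. Once this correspondence is secured, the remainder is a direct substitution and the result follows from Theorem~\ref{thm DB-r opt}.
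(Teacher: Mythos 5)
Your proposal is correct and follows exactly the route of the paper's own (one-sentence) proof: rewrite \eqref{PH_1} as the optimal control problem \eqref{PH_1_OC} and invoke Theorem~\ref{thm DB-r opt}. The only difference is that you explicitly carry out the step the paper leaves implicit --- constructing the multipliers $\psi_z=e^{\int_t^b\frac{\partial L}{\partial z}\,d\theta}$ and $\psi_x=-\psi_z\frac{\partial L}{\partial\dot{x}}$ and checking that the extremal hypothesis (Euler--Lagrange plus transversality) yields a genuine Pontryagin extremal of \eqref{PH_1_OC} --- which is a worthwhile clarification, not a deviation.
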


\begin{proof}
The result follows from Theorem~\ref{thm DB-r opt},
rewriting problem \eqref{PH_1}
as the optimal control problem \eqref{PH_1_OC}.
\end{proof}

We define invariance for \eqref{PH_1} using Definition~\ref{def inv OC}
for the equivalent optimal control problem \eqref{PH_1_OC}.

\begin{definition}[Invariance of problem \eqref{PH_1}]
\label{def inv_PH1}
Let $h^s$ be a one-parameter family of $C^1$ invertible maps
\begin{equation*}
\begin{gathered}
h^s:[a,b]\times \mathbb{R}^n  \times \mathbb{R}
\rightarrow \mathbb{R}\times \mathbb{R}^n  \times \mathbb{R},\\
h^s(t,x(t),z(t))=(\mathcal{T}^s\langle x,z \rangle(t),\mathcal{X}^s\langle x,z \rangle(t),
\mathcal{Z}^s\langle x,z \rangle(t)),\\
h^0(t,x,z)=(t,x,z),
\quad \forall (t,x,z) \in [a,b]\times \mathbb{R}^n  \times \mathbb{R}.
\end{gathered}
\end{equation*}
Problem \eqref{PH_1} is said to be invariant under the transformations $h^s$
if for all admissible pairs $(x(\cdot),z(\cdot))$ the following two conditions hold:
\begin{enumerate}
\item[(i)]
\begin{equation}
\label{eq inv ph1_1}
\left(\frac{z(b)}{b-a}+\xi s + o(s)\right)\frac{d\mathcal{T}^s}{dt}\langle x,z \rangle(t)
=\frac{z(b)}{b-a}
\end{equation}
for some constant $\xi$;
\item[(ii)]
\begin{multline}
\label{eq inv ph1_2}
\frac{d \mathcal{Z}^s}{dt}\langle x,z \rangle(t)\\
= L\left(\mathcal{T}^s\langle x,z \rangle(t),\mathcal{X}^s\langle x,z \rangle(t),
\frac{d\mathcal{X}^s}{d\mathcal{T}^s}\langle x,z \rangle(t),
\mathcal{Z}^s\langle x,z \rangle(t)\right)\frac{d\mathcal{T}^s}{dt}\langle x,z \rangle(t),
\end{multline}
\end{enumerate}
where
$$
\frac{d\mathcal{X}^s}{d\mathcal{T}^s}\langle x,z \rangle(t)
=\frac{\frac{d\mathcal{X}^s}{dt}\langle x,z \rangle(t)}{\frac{d\mathcal{T}^s}{dt}\langle x,z \rangle(t)}.
$$
\end{definition}

Follows the main result of the paper.

\begin{theorem}[Noether's theorem for problem \eqref{PH_1}]
\label{thm noether PH1}
If problem \eqref{PH_1} is invariant in the sense
of Definition~\ref{def inv_PH1}, then the quantity
\begin{multline}
\label{quantity PH1}
\psi_z(t)\biggl[\frac{\partial L}{\partial \dot{x}}\langle x,z \rangle(t) X\langle x,z \rangle(t)
-Z\langle x,z \rangle(t)\\
+\left(L\langle x,z \rangle(t)-\frac{\partial L}{\partial \dot{x}}\langle x,
z \rangle(t) \dot{x}(t)\right)T\langle x,z \rangle(t)\biggr]
\end{multline}
is constant in $t$ along every extremal of problem \eqref{PH_1}, where
\begin{equation*}
\begin{gathered}
T\langle x,z \rangle(t)=\frac{\partial \mathcal{T}^s}{\partial s}\langle x,z \rangle(t)\biggm\vert_{s=0},\\
X\langle x,z \rangle(t)=\frac{\partial \mathcal{X}^s}{\partial s}\langle x,z \rangle(t)\biggm\vert_{s=0},\\
Z\langle x,z \rangle(t)=\frac{\partial \mathcal{Z}^s}{\partial s}\langle x,z \rangle(t)\biggm\vert_{s=0}
\end{gathered}
\end{equation*}
and $\psi_z(t)=e^{\int_t^b\frac{\partial L}{\partial z}\langle x,z \rangle (\theta)d\theta}$.
\end{theorem}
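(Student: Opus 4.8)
The plan is to identify the Noether invariance of Herglotz's problem \eqref{PH_1} with that of its optimal control reformulation \eqref{PH_1_OC}, and then to read off the conserved quantity from Theorem~\ref{thm opt cont Noether}. Recall from the proof of Theorem~\ref{Thm:MR1} that \eqref{PH_1} is the Bolza problem \eqref{problem P} with state $(x,z)$, control $u=\dot x$, running cost $f\equiv 0$ and salvage term $\phi(x,z)=z$; along any Pontryagin extremal the multipliers satisfy $\psi_x=-\psi_z\frac{\partial L}{\partial\dot x}$ and $\psi_z(t)=e^{\int_t^b\frac{\partial L}{\partial z}\langle x,z\rangle(\theta)\,d\theta}$, and the Hamiltonian reduces to $H=\psi_z\bigl(L-\frac{\partial L}{\partial\dot x}\dot x\bigr)$. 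Thus every extremal of \eqref{PH_1} lifts to a Pontryagin extremal of \eqref{PH_1_OC}, to which Theorem~\ref{thm opt cont Noether} applies.

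First I would check that invariance of \eqref{PH_1} in the sense of Definition~\ref{def inv_PH1} is precisely invariance of \eqref{PH_1_OC} in the sense of Definition~\ref{def inv OC}. Condition \eqref{eq inv ph1_1} is exactly \eqref{eq inv OC 1} specialized to $f\equiv 0$ and $\phi(x(b))=z(b)$. For the state condition \eqref{eq inv OC 2} there are two blocks: choosing the transformed control to be $\mathcal U^s=\frac{d\mathcal X^s}{d\mathcal T^s}$ makes the $x$-block $\frac{d\mathcal X^s}{dt}=\mathcal U^s\frac{d\mathcal T^s}{dt}$ an identity that imposes nothing, while the $z$-block is then verbatim \eqref{eq inv ph1_2}. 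Hence the hypotheses of Theorem~\ref{thm opt cont Noether} hold, with generators $T$, $X$, $Z$ as in the statement (the state generator of \eqref{PH_1_OC} being the pair $(X,Z)$).

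Next I would substitute into the conserved quantity furnished by Theorem~\ref{thm opt cont Noether},
$$(b-t)\xi+\psi_x X+\psi_z Z-\Bigl[H+\tfrac{z(b)}{b-a}\Bigr]T,$$
the relations $\psi_x=-\psi_z\frac{\partial L}{\partial\dot x}$ and $H=\psi_z\bigl(L-\frac{\partial L}{\partial\dot x}\dot x\bigr)$ noted above. A short rearrangement recasts this as
$$(b-t)\xi-\tfrac{z(b)}{b-a}T-\psi_z\Bigl[\tfrac{\partial L}{\partial\dot x}X-Z+\bigl(L-\tfrac{\partial L}{\partial\dot x}\dot x\bigr)T\Bigr],$$
that is, as the affine remainder $(b-t)\xi-\frac{z(b)}{b-a}T$ minus the quantity \eqref{quantity PH1} whose constancy we wish to establish. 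Since the displayed expression is constant along extremals, it remains only to show that the remainder $(b-t)\xi-\frac{z(b)}{b-a}T$ is constant in $t$.

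This last step is where the real content sits, and it is the step I expect to be the main obstacle. Differentiating the invariance identity \eqref{eq inv ph1_1} with respect to $s$ at $s=0$, and using $\mathcal T^0=t$ (so $\frac{d\mathcal T^0}{dt}=1$) together with $\frac{\partial}{\partial s}\frac{d\mathcal T^s}{dt}\big|_{s=0}=\dot T$, yields the pointwise identity $\xi+\frac{z(b)}{b-a}\dot T=0$. Consequently $\frac{d}{dt}\bigl[(b-t)\xi-\frac{z(b)}{b-a}T\bigr]=-\xi-\frac{z(b)}{b-a}\dot T=0$, so the remainder is indeed constant and \eqref{quantity PH1} is conserved along every extremal of \eqref{PH_1}. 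In short, the only nonroutine point is the bookkeeping of the extra $(b-t)\xi$ and salvage contributions produced by the Bolza-to-Lagrange passage built into Theorem~\ref{thm opt cont Noether}; invoking the differentiated form of \eqref{eq inv ph1_1} makes them cancel, and everything else reduces to the substitutions already available from Theorem~\ref{Thm:MR1}.
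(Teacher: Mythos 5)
Your argument is correct and follows essentially the same route as the paper: reformulate \eqref{PH_1} as the optimal control problem \eqref{PH_1_OC}, check that Definition~\ref{def inv_PH1} implies Definition~\ref{def inv OC} via $\mathcal{U}^s=\frac{d\mathcal{X}^s}{d\mathcal{T}^s}$, apply Theorem~\ref{thm opt cont Noether}, substitute $\psi_x=-\psi_z\frac{\partial L}{\partial \dot{x}}$, and then show the leftover term $(b-t)\xi-\frac{z(b)}{b-a}T$ is constant. The only (immaterial) difference is in that last step: the paper integrates the invariance identity \eqref{eq inv ph1_1} from $a$ to $t$ before differentiating at $s=0$, whereas you differentiate at $s=0$ first and conclude $\xi+\frac{z(b)}{b-a}\dot{T}=0$, which amounts to the same computation.
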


\begin{proof}
As before, we rewrite problem \eqref{PH_1} in the equivalent optimal control form \eqref{PH_1_OC},
where $x$ and $z$ are the state variables and $u$ the control.
We prove that if problem \eqref{PH_1} is invariant in the sense
of Definition~\ref{def inv_PH1}, then \eqref{PH_1_OC} is invariant in the sense of Definition~\ref{def inv OC}.
First, observe that if equation \eqref{eq inv ph1_1} holds, then \eqref{eq inv OC 1} holds
for \eqref{PH_1_OC}: here $f \equiv 0$, $\phi(x,z) = z$ and \eqref{eq inv OC 1} simplifies to
$\left[\frac{z(b)}{b-a} + \xi s + o(s)\right]\frac{d\mathcal{T}^s}{dt}\langle x,z \rangle(t)
= \frac{z(b)}{b-a}$.
Note that the first equation of the control system of problem \eqref{PH_1_OC} ($u(t) = \dot{x}(t)$) defines
$\mathcal{U}^s:=\frac{d\mathcal{X}^s}{d\mathcal{T}^s}$, that is,
\begin{equation}
\label{eq U^s}
\frac{d \mathcal{X}^s}{dt}\langle x,z \rangle(t)
=\mathcal{U}^s\langle x,z \rangle(t)\frac{d\mathcal{T}^s}{dt}\langle x,z \rangle(t).
\end{equation}
Hence, if equation \eqref{eq inv ph1_2} and \eqref{eq U^s} holds, then
there is also invariance of the control system of \eqref{PH_1_OC} in the sense of \eqref{eq inv OC 2}
and consequently problem \eqref{PH_1_OC} is invariant in the sense of Definition~\ref{def inv OC}.
We are now in conditions to apply Theorem~\ref{thm opt cont Noether} to problem \eqref{PH_1_OC}, which
guarantees  that the quantity
\begin{multline*}
(b-t)\xi +
\psi_x(t)\cdot X(t,x(t),u(t),z(t)) + \psi_z(t)\cdot Z(t,x(t),u(t),z(t))\\
-\left(H(t,x(t),u(t),z(t),\psi_x(t),\psi_z(t)) + \frac{z(b)}{b-a}\right) \cdot T(t,x(t),u(t),z(t))
\end{multline*}
is constant in $t$ along every Pontryagin extremal of problem \eqref{PH_1_OC}, where
\begin{equation*}
H(t,x,u,z,\psi_x,\psi_z)=\psi_x u +\psi_z L(t,x,u,z).
\end{equation*}
This means that the quantity
\begin{multline*}
(b-t)\xi +
\psi_x(t) X\langle x,z \rangle(t) + \psi_z(t) Z\langle x,z \rangle(t)\\
-\left(\psi_x(t) \dot{x}(t) +\psi_z(t) L\langle x,z \rangle(t)
+ \frac{z(b)}{b-a}\right)T\langle x,z \rangle(t)
\end{multline*}
is constant in $t$ along all extremals of problem \eqref{PH_1}, where
$$
\psi_x(t)=-\psi_z(t) \frac{\partial L}{\partial u}\langle x,z \rangle(t)
=-\psi_z(t) \frac{\partial L}{\partial \dot{x}}\langle x,z \rangle(t).
$$
Equivalently,
\begin{multline*}
(b-t)\xi - \frac{z(b)}{b-a}T\langle x,z \rangle(t)
-\psi_z(t)\biggl[\frac{\partial L}{\partial \dot{x}}\langle x,z \rangle(t) X\langle x,z \rangle(t)
-Z\langle x,z \rangle(t)\\
+\left(L\langle x,z \rangle(t)-\frac{\partial L}{\partial \dot{x}}\langle x,
z \rangle(t) \dot{x}(t)\right)T\langle x,z \rangle(t)\biggr]
\end{multline*}
is a constant along the extremals.
To conclude the proof, we just need to prove that the quantity
\begin{equation}
\label{eq:constant}
(b-t)\xi - \frac{z(b)}{b-a}T\langle x,z \rangle(t)
\end{equation}
is a constant.
From the invariance condition \eqref{eq inv ph1_1} we know that
\begin{equation*}
\left(z(b)+\xi (b-a) s + o(s)\right)\frac{d\mathcal{T}^s}{dt}\langle x,z \rangle(t) =z(b).
\end{equation*}
Integrating from $a$ to $t$, we conclude that
\begin{multline}
\label{exp:prv:ad}
\left(z(b)+\xi (b-a) s + o(s)\right)\mathcal{T}^s\langle x,z \rangle(t)\\
=z(b)(t-a)+\left(z(b)+\xi (b-a) s + o(s)\right)\mathcal{T}^s\langle x,z \rangle(a).
\end{multline}
Differentiating \eqref{exp:prv:ad} with respect to $s$, and then putting $s=0$, we obtain
\begin{equation}
\label{eq:relation}
\xi (b-a) t + z(b) T\langle x,z \rangle(t) =\xi(b-a)a+z(b)T\langle x,z\rangle(a).
\end{equation}
We conclude from \eqref{eq:relation} that expression \eqref{eq:constant} is the constant
$(b-a)\xi - \frac{z(b)}{b-a}T\langle x,z\rangle(a)$.
\end{proof}


\section{Conclusion}
\label{sec:conc}

We introduced a different approach to the generalized variational principle of Herglotz,
by looking to Herglotz's problem as an optimal control problem.
A Noether type theorem for Herglotz's problem
was first proved by Georgieva and Guenther
in \cite{Georgieva2002}: under the condition of invariance
\begin{equation}
\label{def inv georg}
\frac{d}{ds}\left[L\left(\mathcal{T}^s\langle x,z \rangle(t),\mathcal{X}^s\langle x,z \rangle(t),
\frac{d\mathcal{X}^s}{d\mathcal{T}^s}\langle x,z \rangle(t),z(t)\right)
\frac{d\mathcal{T}^s}{dt}\langle x,z \rangle(t)\right]\bigg\vert_{s=0}=0,
\end{equation}
they obtained
\begin{equation}
\label{quant georg}
\lambda(t) \Biggl[\frac{\partial L}{\partial \dot{x}}\langle x,z \rangle(t) X\langle x,z \rangle(t)
+\left(L\langle x,z \rangle(t)-\frac{\partial L}{\partial \dot{x}}\langle x,z \rangle(t) \dot{x}(t)\right)T\langle x,z \rangle(t)\Biggr],
\end{equation}
where $\lambda(t)=e^{-\int_a^t\frac{\partial L}{\partial z}\langle x,z \rangle (\theta)d\theta}$,
as a conserved quantity along the extremals of problem \eqref{PH_1}.
Our results improve those of \cite{Georgieva2002} in three ways:
(i) we consider a wider class of piecewise admissible functions;
(ii) we consider a more general notion of invariance
whose transformations $\mathcal{T}^s$, $\mathcal{X}^s$
and $\mathcal{Z}^s$ may also depend on velocities, i.e.,
on $\dot{x}(t)$ (note that if \eqref{eq inv ph1_2} holds
with $\mathcal{Z}^s\langle x,z \rangle = z$, then \eqref{def inv georg} also holds);
(iii) the conserved quantity \eqref{quant georg}, up to multiplication by a constant,
is a particular case of \eqref{quantity PH1} when there is no transformation in $z$
($Z=\left.\frac{\partial \mathcal{Z}^s}{\partial s}\right\vert_{s=0}=0$).
The results here obtained can be generalized to higher-order variational problems of Herglotz type.
This is under investigation and will be addressed elsewhere.


\section*{Acknowledgments}

This work was supported by Portuguese funds through the
\emph{Center for Research and Development in Mathematics and Applications} (CIDMA)
and the \emph{Portuguese Foundation for Science and Technology} (FCT),
within project PEst-OE/MAT/UI4106/2014. The authors would like to thank
an anonymous Reviewer for valuable comments.




\begin{thebibliography}{xx}

\bibitem{MR3072684}
Frederico G. S. F., Torres D. F. M.:
Fractional isoperimetric Noether's theorem in the Riemann-Liouville sense,
Rep. Math. Phys. 71, no.~3, 291--304 (2013)
{\tt arXiv:1205.4853}
	
\bibitem{Georgieva2002}
Georgieva B., Guenther R.:
First Noether-type theorem for the generalized variational principle of Herglotz,
Topol. Methods Nonlinear Anal. 20, no.~2, 261--273 (2002)

\bibitem{Guenther1996}
Guenther R. B., Guenther C. M., Gottsch J. A.:
The Herglotz lectures on contact transformations and Hamiltonian systems,
Lecture Notes in Nonlinear Analysis, Vol.~1,
Juliusz Schauder Center for Nonlinear Studies,
Nicholas Copernicus University, Tor\'{u}n (1996)

\bibitem{Herglotz1930}
Herglotz, G.:
Ber\"uhrungstransformationen,
Lectures at the University of G\"ottingen, G\"ottingen (1930)

\bibitem{Pontryagin}
Pontryagin L. S.,  Boltyanskii V. G., Gamkrelidze R. V., Mishchenko E. F.:
The mathematical theory of optimal processes.
Interscience Publishers, John Wiley and Sons Inc, New York, London (1962)
	
\bibitem{Simao+NM+Torres2013}
Santos S. P. S.,  Martins N., Torres D. F. M.:
Higher-order variational problems of Herglotz type,
Vietnam J. Math. (2014) [DOI: 10.1007/s10013-013-0048-9]
{\tt arXiv:1309.6518}

\bibitem{Torres2002}
Torres D. F. M.:
On the Noether theorem for optimal control,
European Journal of Control 8, no.~1, 56--63 (2002)
	
\bibitem{Torres2001}
Torres D. F. M.:
Conservation laws in optimal control.
In: Dynamics, bifurcations, and control,
Lecture Notes in Control and Inform. Sci. 273, Springer, Berlin,
287--296 (2002)
	
\bibitem{Torres2004}
Torres D. F. M.:
Quasi-invariant optimal control problems,
Port. Math. (N.S.) 61, no.~1, 97--114 (2004)
{\tt arXiv:math/0302264}

\bibitem{MR2099056}
Torres D. F. M.:
Carath\'{e}odory equivalence Noether theorems, and Tonelli
full-regularity in the calculus of variations and optimal control,
J. Math. Sci. (N. Y.) 120, no.~1, 1032--1050 (2004)
{\tt arXiv:math/0206230}

\bibitem{MR2098297}
Torres D. F. M.:
Proper extensions of Noether's symmetry theorem
for nonsmooth extremals of the calculus of variations,
Commun. Pure Appl. Anal. 3, no.~3, 491--500 (2004)

\bibitem{Torres2006}
Torres D. F. M.:
A Noether theorem on unimprovable conservation laws
for vector-valued optimization problems in control theory,
Georgian Math. J. 13, no.~1, 173--182 (2006)
{\tt arXiv:math/0411173}
	
\bibitem{vanBrunt}
van Brunt B.:
The calculus of variations,
Universitext, Springer-Verlag, New York (2004)
	
\end{thebibliography}
\end{document}